\theoremstyle{plain}
\newtheorem{theorem}{Theorem}
\newtheorem{proposition}{Proposition}
\newtheorem{lemma}{Lemma}
\newtheorem{corollary}[theorem]{Corollary}
\theoremstyle{definition}
\newtheorem*{remark*}{Remark}
\newcommand{\Z}{\mathbb Z}
\date{\today}
\title{Euler's divergent series in arithmetic progressions}
\author{Anne-Maria Ernvall-Hyt\"onen}
\address{Anne-Maria Ernvall-Hyt\"onen, Matematik och Statistik, {\AA}bo Akademi University, Domkyrkotorget 1, 20500 {\AA}bo, Finland}
\email{anne-maria.ernvall-hytonen@abo.fi}
\author{Tapani Matala-aho}
\address{Tapani Matala-aho, Matematiikka, PL 8000, 90014 Oulun yliopisto, Finland}
\email{tapani.matala-aho@oulu.fi}
\author{Louna Sepp\"al\"a}
\address{Louna Sepp\"al\"a, Matematiikka, PL 8000, 90014 Oulun yliopisto, Finland}
\email{louna.seppala@oulu.fi}
\thanks{The work of Louna Sepp\"al\"a was supported by the University of Oulu Scholarship Foundation and the Vilho, Yrj\"o and Kalle V\"ais\"al\"a Foundation.}
\subjclass[2010]{11J61}
\keywords{divergent series, global relation, $p$-adic}
\begin{document}

\begin{abstract}
Let $\xi$ and $m$ be integers satisfying $\xi\ne 0$ and $m\ge 3$. 
We show that for any given integers $a$ and $b$, $b \neq 0$, there are $\frac{\varphi(m)}{2}$ reduced residue classes modulo $m$ each containing 
infinitely many primes $p$ such that $a-bF_p(\xi) \ne 0$, where $F_p(\xi)=\sum_{n=0}^\infty n!\xi^n$ is the $p$-adic evaluation of Euler's factorial series at the point $\xi$.
\end{abstract}

\maketitle

\section{Introduction and results}

Euler's factorial series is defined as the sum
\begin{equation}\label{hyperseries}
F(z):={}_2F_0(1,1\mid z)=\sum_{n=0}^\infty n!z^n.
\end{equation}
It is clear that in the standard Archimedean metric, it only converges when $z=0$. 
In the case of the $p$-adic metric, however, the situation changes drastically.
For a prime $p$ the normalization $|p|_p=p^{-1}$ gives the usual $p$-adic absolute value.
The $p$-adic completion of the rationals $\mathbb{Q}$ with respect to the metric $|\cdot|_p$ is denoted by $\mathbb{Q}_p$.
Now the series \eqref{hyperseries} converges in the unit disc $\left\{z\in\mathbb{Q}_p \; | \; |z|_p\le 1 \right\}$ and consequently defines a function $F_p$
in that disc by the values $F_p(z):=\sum_{n=0}^\infty n!z^n$.

We are interested in arithmetical properties of the values $F_p(\xi)$ of Euler's factorial series at non-zero integer points
$\xi \in \Z \setminus \{0\}$. It is an open question whether e.g.~the values $F_p(\pm 1)$ are irrational or even non-zero, 
which is why it has become customary to study global relations. Let $P(x)\in \mathbb{Z}[x]$, $d := \deg P(x)\ge 1$.
For a given $\xi$, a \emph{global relation of degree $d$} is any polynomial identity $P(F_p(\xi))=0$ which is satisfied for all the primes $p$ such that 
$F_p(\xi)$ is defined.
There are several works considering global relations of the series \eqref{hyperseries} and its generalizations,
including generalised hypergeometric series; see e.g.~\cite{BCY04}, \cite{Chirskii1992}, \cite{Chirskii2014}.

In the recent works \cite{Chirskii2015} and \cite{TapaniWadim} the authors investigated first degree global relations
for Euler's factorial series. Chirski\u \i \ proved (with our notation)

\begin{proposition}\label{Chirskiiglobal}\cite{Chirskii2015}
There exist infinitely many primes $p$ such that $F_p(1) \ne 0$.
\end{proposition}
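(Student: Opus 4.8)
The plan is to run a Hermite--Padé argument, playing the Archimedean size of explicit rational approximations to $F$ against the $p$-adic smallness of the corresponding remainders. First I would write down the (completely explicit) Padé table of $F={}_2F_0(1,1\mid z)$: for $n\ge 0$ put
$$
B_n(z)=\sum_{i=0}^n(-1)^i\binom{n+1}{i}\frac{n!}{(n-i)!}\,z^i\in\Z[z],
$$
let $A_n(z)\in\Z[z]$ be the degree-$n$ truncation of $B_n(z)F(z)$, and set $R_n(z):=B_n(z)F(z)-A_n(z)=\sum_{k\ge 2n+1}c_{n,k}z^k$ (that this vanishes to order $2n+1$ uses that $F$ satisfies $z^2F'(z)=(1-z)F(z)-1$, which forces the middle coefficients to cancel). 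From this construction I would extract two facts. (i) \emph{Size}: since $[z^j]A_n=\sum_{i\le j}\bigl([z^i]B_n\bigr)(j-i)!$ and $\frac{n!\,(j-i)!}{(n-i)!}\le n!$ whenever $j\le n$, we get $|A_n(1)|,|B_n(1)|\le (n+1)\,n!\,2^{n+1}$. (ii) \emph{Factorial in the remainder}: writing $c_{n,k}=\sum_i(-1)^i\binom{n+1}{i}\,n!\,(n-i+1)(n-i+2)\cdots(k-i)$, for $k\ge 2n+1$ the trailing product runs over at least $n+1$ consecutive integers, hence is divisible by $(n+1)!$; so $n!\,(n+1)!\mid c_{n,k}$ for every $k\ge 2n+1$, while $c_{n,2n+1}=\pm\,n!(n+1)!\ne 0$, so $R_n\not\equiv 0$.

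Next I would specialise $z=1$ inside each $\Q_p$. Because $n!(n+1)!\mid c_{n,k}$ and $v_p(c_{n,k})\to\infty$ as $k\to\infty$, the series $\rho_n^{(p)}:=\sum_{k\ge 2n+1}c_{n,k}$ converges in $\Z_p$ with $v_p\bigl(\rho_n^{(p)}\bigr)\ge v_p\bigl(n!(n+1)!\bigr)$, and evaluating the power-series identity at $z=1$ gives
$$
\beta_n\,F_p(1)-\alpha_n=\rho_n^{(p)}\quad\text{in }\Q_p,
$$
where $\alpha_n:=A_n(1)$ and $\beta_n:=B_n(1)$ are fixed integers (independent of $p$). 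I would also record the determinantal (Casoratian) identity $\alpha_n\beta_{n+1}-\alpha_{n+1}\beta_n=\pm\,n!(n+1)!\ne0$, which comes from $R_n=O(z^{2n+1})$, $R_{n+1}=O(z^{2n+3})$ in the usual way and which forbids $\alpha_n=\alpha_{n+1}=0$; hence $\alpha_n\ne 0$ for infinitely many $n$.

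Now suppose, for contradiction, that $F_p(1)\ne 0$ for only finitely many primes, say $F_p(1)=0$ for all $p>N_0$. For such $p$ the displayed relation collapses to $\alpha_n=-\rho_n^{(p)}$, hence $v_p(\alpha_n)\ge v_p\bigl(n!(n+1)!\bigr)$. Picking a large $n$ with $\alpha_n\ne 0$, this means $\prod_{p>N_0}p^{\,v_p(n!(n+1)!)}$ divides $\alpha_n$, so by Legendre's bound $v_p(m!)<\tfrac{m}{p-1}$,
$$
|\alpha_n|\ \ge\ \frac{n!(n+1)!}{\prod_{p\le N_0}p^{\,v_p(n!(n+1)!)}}\ \ge\ n!(n+1)!\;\exp\!\Bigl(-(2n+1)\textstyle\sum_{p\le N_0}\frac{\log p}{p-1}\Bigr).
$$
Comparing this with $|\alpha_n|=|A_n(1)|\le (n+1)\,n!\,2^{n+1}$ from (i) yields $n!\le\exp\bigl(O_{N_0}(n)\bigr)$, which is false once $n$ is large. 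This contradiction proves the proposition.

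The step I expect to be the crux is the interplay of (i) and (ii): the method only works because the Padé remainder of $F$ carries a factorial of order $n!(n+1)!\asymp e^{2n\log n}$, whereas the approximants $\alpha_n,\beta_n$ stay of size only $e^{n\log n+O(n)}$, so the ``gain'' $n!\asymp e^{n\log n}$ survives after one throws away the $N_0$-smooth part of $n!(n+1)!$. This asymmetry is exactly what the hypergeometric (equivalently, first-order differential) structure of ${}_2F_0(1,1\mid z)$ provides; for a generic power series the remainder would only vanish to order $2n+1$ with no factorial saving and the whole estimate would be vacuous. A secondary, easily-missed point is the non-vanishing input: without the Casoratian identity one cannot rule out $\alpha_n=0$, which would make the lower bound on $|\alpha_n|$ empty.
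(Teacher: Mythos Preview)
Your argument is correct. The Pad\'e construction you write down is exactly the one underlying the Matala-aho--Zudilin result quoted here as Proposition~\ref{TAWA}: the order-$(2n{+}1)$ vanishing, the size bound $|A_n(1)|,|B_n(1)|\le (n{+}1)\,n!\,2^{n+1}$, the divisibility $n!(n{+}1)!\mid c_{n,k}$ for $k\ge 2n{+}1$, and the Casoratian $\alpha_n\beta_{n+1}-\alpha_{n+1}\beta_n=\pm n!(n{+}1)!$ all check out, and your final comparison $n!(n{+}1)!\,e^{-O_{N_0}(n)}\le |\alpha_n|\le (n{+}1)n!\,2^{n+1}$ gives the desired contradiction.

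As for the comparison: the paper does not give its own proof of Proposition~\ref{Chirskiiglobal}; it is quoted from Chirski\u\i. What the paper \emph{does} is recover (and generalise) it as the special case $\xi=1$, $\Lambda(x)=x$ of Corollary~\ref{Corkaikkip}, obtained from Theorem~\ref{AB} by taking $T=\mathbb{P}$. Theorem~\ref{AB} in turn is a short ``remove a finite set and reapply'' argument sitting on top of Proposition~\ref{TAWA}, whose proof in \cite{TapaniWadim} is precisely a Pad\'e/global-relation computation of the type you carry out. So your route and the paper's route rest on the same engine; the difference is packaging. The paper isolates the numerical criterion~\eqref{riistajaehto} once and for all and then argues abstractly (which is what lets it later feed in prime sets restricted to residue classes), whereas you unroll the Pad\'e machinery explicitly for the single target $F_p(1)\ne 0$ and bake the infinitude step directly into the contradiction. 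Your version is self-contained and makes the ``factorial gain'' $n!(n{+}1)!$ versus $n!\,2^{O(n)}$ completely visible; the paper's version is more modular and reusable.
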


The authors in \cite{TapaniWadim} proved

\begin{proposition}\label{TAWA}\cite{TapaniWadim}
Given $\xi\in\mathbb{Z}\setminus\{0\}$, let $R\subseteq \mathbb{P}$ be  such that
\begin{equation}\label{riistajaehto}
\limsup_{n\to\infty} c^nn!\prod_{p\in R}|n!|_p^2=0,
\quad\text{where}\; c=c(\xi;R):=4|\xi|\prod_{p\in R}|\xi|_p^2.
\end{equation}
Then either there exists a prime $p\in R$ for which $F_p(\xi)$ is irrational, or there are two distinct primes $p,q\in R$
such that $F_p(\xi)\ne F_q(\xi)$ \textup(while $F_p(\xi), F_q(\xi)\in \mathbb{Q}$\textup).
\end{proposition}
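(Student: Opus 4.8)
\emph{The plan.} I would argue by contradiction. Negating the conclusion means precisely that every value $F_p(\xi)$ with $p\in R$ is rational \emph{and} they all coincide, say $F_p(\xi)=\alpha=u/v$ for all $p\in R$, where $u,v\in\Z$, $v\ge 1$, $\gcd(u,v)=1$. From suitable Pad\'e approximations to $F$ I would build a sequence of integers $N_n$ that is very small at all places in $R$ yet not too large at infinity, and then reach a contradiction with the product formula on $\Q$.

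\emph{The approximations.} For $n\ge 1$ take
\[
B_n(z)=\sum_{i=0}^{n}(-1)^i\binom{n}{i}\frac{n!}{(n-i)!}\,z^i\in\Z[z],\qquad A_n(z)=\sum_{k=0}^{n-1}\Bigl(\sum_{i=0}^{k}[z^i]B_n\cdot(k-i)!\Bigr)z^k\in\Z[z],
\]
where $[z^k]g$ denotes the coefficient of $z^k$ in $g$, and set $R_n(z):=B_n(z)F(z)-A_n(z)$ as a formal power series. The finite-difference vanishing $\sum_{i=0}^n(-1)^i\binom ni P(i)=0$ for $\deg P<n$ makes the coefficients of $z^n,\dots,z^{2n-1}$ in $B_nF$ vanish, so $R_n$ has a zero of order at least $2n$ at the origin, with $[z^{2n}]R_n=(n!)^2$ (using $\sum_i(-1)^i\binom ni\binom{2n-i}{n}=1$). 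Two estimates are needed. \emph{(Archimedean.)} Since $\binom ni\le 2^n$ and $\frac{n!}{(n-i)!}(k-i)!\le n!$ whenever $i\le k\le n-1$, one gets $\max\{|A_n(\xi)|,|B_n(\xi)|\}\le C_\xi\,n^{2}(2|\xi|)^n\,n!$ with $C_\xi$ depending only on $\alpha$; the point is that the cancellation in $A_n$ leaves only a single factorial. \emph{(Non-archimedean.)} Writing $\frac{(2n+m-i)!}{(n-i)!}=(n+m)!\binom{2n+m-i}{n+m}$ gives the closed form $[z^{2n+m}]R_n=n!\,(n+m)!\,S_{n,m}$ with $S_{n,m}\in\Z$, so for the integer point $\xi$ (whence $|\xi|_p\le 1$) and any prime $p$ the series $R_n(\xi)=\sum_{m\ge0}n!\,(n+m)!\,S_{n,m}\,\xi^{2n+m}$ converges in $\Q_p$ and, as $n!\mid(n+m)!$,
\[
|R_n(\xi)|_p\le|n!|_p^{2}\,|\xi|_p^{2n};
\]
moreover $B_n(\xi)F_p(\xi)-A_n(\xi)=R_n(\xi)$ holds in $\Q_p$ for every prime $p$.

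\emph{Non-vanishing.} To feed the product formula I need $R_n(\xi)\ne0$ infinitely often. From $A_j=B_jF-R_j$ the polynomial $D_n(z):=B_n(z)A_{n+1}(z)-B_{n+1}(z)A_n(z)$ also equals $B_{n+1}(z)R_n(z)-B_n(z)R_{n+1}(z)$; it has degree at most $2n$ and order at least $2n$ at $0$, hence $D_n(z)=\bigl([z^0]B_{n+1}\bigr)\bigl([z^{2n}]R_n\bigr)z^{2n}=(n!)^2z^{2n}$. Evaluating at $\xi$ gives $(n!)^2\xi^{2n}=B_{n+1}(\xi)R_n(\xi)-B_n(\xi)R_{n+1}(\xi)\ne0$, so $R_n(\xi)$ and $R_{n+1}(\xi)$ cannot both vanish.

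\emph{Conclusion.} Put $N_n:=u\,B_n(\xi)-v\,A_n(\xi)\in\Z$. Since $F_p(\xi)=u/v$ for $p\in R$, we have $N_n=v\,R_n(\xi)$ in $\Q_p$, so $|N_n|_p\le|R_n(\xi)|_p\le|n!|_p^{2}|\xi|_p^{2n}$ for all $p\in R$, while trivially $|N_n|_p\le1$ for every prime. Therefore, whenever $N_n\ne0$, the product formula gives
\begin{align*}
1=|N_n|\prod_{p}|N_n|_p\le|N_n|\prod_{p\in R}|N_n|_p &\le C_\xi\,n^{2}(2|\xi|)^n\,n!\prod_{p\in R}|n!|_p^{2}|\xi|_p^{2n}\\
&=C_\xi\,n^{2}\Bigl(2|\xi|\prod_{p\in R}|\xi|_p^{2}\Bigr)^{n} n!\prod_{p\in R}|n!|_p^{2}.
\end{align*}
For all large $n$ we have $C_\xi\,n^2\,2^n\le4^n$, so the right-hand side is at most $c^n\,n!\prod_{p\in R}|n!|_p^{2}$ with $c=4|\xi|\prod_{p\in R}|\xi|_p^{2}$, which tends to $0$ by hypothesis; choosing such an $n$ with also $R_n(\xi)\ne0$ (possible by the previous step) gives $1<1$, a contradiction. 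I expect the genuine work to be the construction and its estimates — above all the ``squared factorial'' divisibility $|[z^{2n+m}]R_n|_p\le|n!|_p^{2}$, which is exactly what makes the approximants beat the trivial truncation $B_n\equiv1$, together with the cancellation that keeps $A_n$ of size $C^n n!$ rather than $(n!)^2$.
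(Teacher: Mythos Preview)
The paper does not itself prove this proposition; it is quoted from Matala-aho--Zudilin \cite{TapaniWadim} and used as a black box (the proof of Theorem~\ref{AB} even invokes ``the proof of Proposition~\ref{TAWA} in \cite{TapaniWadim}''). Your argument is a correct reconstruction of that external proof: the Pad\'e-type denominators $B_n$, the remainder $R_n=B_nF-A_n$ vanishing to order $2n$, the key divisibility $n!(n+m)!\mid[z^{2n+m}]R_n$ yielding $|R_n(\xi)|_p\le|n!|_p^{2}|\xi|_p^{2n}$, the Casorati determinant $B_nA_{n+1}-B_{n+1}A_n=(n!)^2z^{2n}$ for non-vanishing, and the product-formula endgame are precisely the ingredients used in \cite{TapaniWadim}. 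Two cosmetic slips worth fixing: your constant $C_\xi$ should also absorb $|u|+|v|$ (you wrote ``depending only on $\alpha$'', which is fine once $u,v$ are fixed by $\alpha$), and the step $|N_n|_p\le|R_n(\xi)|_p$ silently uses $|v|_p\le1$; neither affects the argument.
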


From now on let $\Lambda(x)=a-bx\in\mathbb{Z}[x]$, $b\ne 0$. Note that Proposition \ref{Chirskiiglobal}
corresponds to the case $\Lambda(x)=x$ and Proposition \ref{TAWA} to the case $\Lambda(x)=a-bx$.
Based on Proposition \ref{TAWA}, we are able to prove a more extensive result, Theorem \ref{AB}, which we shall formulate by using the polynomial $\Lambda(x)=a-bx$.

For the rest of the work we assume $\xi\in\mathbb{Z}\setminus\{0\}$.

\begin{theorem}\label{AB}
Let $T\subseteq\mathbb{P}$ be a subset of primes such that the set $T\setminus S$ satisfies condition \eqref{riistajaehto} for any
finite subset $S$ of $T$.
Then there exist infinitely many primes $p\in T$ such that $\Lambda(F_p(\xi))\ne 0$.
\end{theorem}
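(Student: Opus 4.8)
The plan is to argue by contradiction, reducing the statement directly to Proposition \ref{TAWA}. Suppose the conclusion fails, so that $\Lambda(F_p(\xi)) \ne 0$ for only finitely many primes $p \in T$; collect all of these exceptional primes into a finite set $S \subseteq T$. Then for every $p \in R := T \setminus S$ we have $a - bF_p(\xi) = 0$, and since $b \ne 0$ this forces $F_p(\xi) = a/b$. In particular $F_p(\xi)$ is rational, and it is the \emph{same} rational number $a/b$ for all $p \in R$.

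Now I would invoke the hypothesis on $T$: since $S$ is a finite subset of $T$, the set $R = T \setminus S$ satisfies condition \eqref{riistajaehto}, so Proposition \ref{TAWA} applies to $R$. (One should note in passing that such an $R$ is necessarily infinite: for a finite set of primes the product $\prod_{p\in R}|n!|_p^2$ decays only exponentially in $n$ and cannot counteract the super-exponential growth of $n!$, so \eqref{riistajaehto} would fail; this is consistent with the conclusion of Proposition \ref{TAWA}, which is otherwise vacuous.)

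Proposition \ref{TAWA} asserts that either some $p \in R$ has $F_p(\xi)$ irrational, or there exist distinct $p, q \in R$ with $F_p(\xi) \ne F_q(\xi)$. The first alternative is impossible because $F_p(\xi) = a/b \in \mathbb{Q}$ for every $p \in R$; the second is impossible because $F_p(\xi) = a/b = F_q(\xi)$ for all $p, q \in R$. This contradiction proves that $\Lambda(F_p(\xi)) \ne 0$ for infinitely many $p \in T$. There is no substantial obstacle here: the real content is Proposition \ref{TAWA}, and Theorem \ref{AB} is essentially the observation that a first-degree global relation $\Lambda(F_p(\xi)) = 0$ pins the value $F_p(\xi)$ to a single fixed rational number, so that discarding finitely many exceptions and applying Proposition \ref{TAWA} to the cofinite remainder yields the claim. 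The only point needing a moment's care is the logical bookkeeping — that "finitely many exceptions" is precisely what permits the application of the hypothesis with $S$ finite.
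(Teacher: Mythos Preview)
Your proof is correct and follows essentially the same contradiction argument as the paper: assume the exceptional set $A=\{p\in T:\Lambda(F_p(\xi))\ne 0\}$ is finite, apply the hypothesis to $T\setminus A$, and derive a contradiction via Proposition~\ref{TAWA}. The only cosmetic difference is that the paper appeals to the \emph{proof} of Proposition~\ref{TAWA} to conclude directly that some $p\in R$ satisfies $\Lambda(F_p(\xi))\ne 0$, whereas you use only the \emph{statement} of Proposition~\ref{TAWA} and then observe that both alternatives of its disjunction are incompatible with $F_p(\xi)=a/b$ being constant on $R$; your route is self-contained and arguably tidier.
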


\begin{proof}
Define
\begin{equation*}
\mathcal{R}:=\left\{R\subseteq \mathbb{P} \; | \; R\ \text{satisfies condition}\ \eqref{riistajaehto} \right\}.
\end{equation*}
If $R\in \mathcal{R}$, the proof of Proposition \ref{TAWA} in \cite{TapaniWadim}
shows that there exists at least one prime $p\in R$ such that $\Lambda(F_p(\xi)) \ne 0$.

Take now a subset $T$ of primes such that $T\setminus S \in \mathcal{R}$ for any finite subset $S$ of $T$. 
Define a new set 
\begin{equation*}
A:= \left\{p\in T\; |\; \Lambda(F_p(\xi)) \ne 0 \right\}.
\end{equation*}
If the set $A$ is finite, then $T\setminus A\in \mathcal{R}$ by assumption.
But
\begin{equation*}
T\setminus A=\{p\in T\; |\; \Lambda(F_p(\xi))= 0 \}.
\end{equation*}
This is a contradiction, and thus $\# A=\infty$.
\end{proof}

From Theorem \ref{AB} it follows that for any set $R\in \mathcal{R}$ satisfying the assumptions of Theorem \ref{AB}, 
there exist infinitely many primes $p\in R$ such that $F_p(\xi) \ne \frac{a}{b}$.
In particular, if we take $T=\mathbb{P}$, then we see immediately that any 
$\mathbb{P}\setminus S \in \mathcal{R}$, if $S$ is a finite set of primes.
Thus, Theorem \ref{AB} implies the following corollary.

\begin{corollary}\label{Corkaikkip}
Let $\frac{a}{b} \in \mathbb{Q}$ be given. 
Then there exist infinitely many primes $p\in \mathbb{P}$ such that $F_p(\xi) \ne \frac{a}{b}$. 
\end{corollary}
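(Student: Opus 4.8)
The plan is to derive the corollary from Theorem \ref{AB} applied with $T=\mathbb{P}$. Write the given rational as $\frac{a}{b}$ with $a,b\in\mathbb{Z}$, $b\neq 0$, and set $\Lambda(x)=a-bx$; then $\Lambda(F_p(\xi))\neq 0$ holds exactly when $F_p(\xi)\neq\frac{a}{b}$ (recall that $F_p(\xi)$ is defined for every prime $p$ since $\xi$ is an integer). So it suffices to verify the hypothesis of Theorem \ref{AB}: that for every finite $S\subseteq\mathbb{P}$ the set $R:=\mathbb{P}\setminus S$ satisfies condition \eqref{riistajaehto}.

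I would check this by means of the product formula $\prod_{p\in\mathbb{P}}|x|_p=|x|^{-1}$, valid for $x\in\mathbb{Q}\setminus\{0\}$. Writing $v_p$ for the $p$-adic valuation and applying the formula to $x=n!$ gives $\prod_{p\in\mathbb{P}}|n!|_p=(n!)^{-1}$, hence
\[
\prod_{p\in R}|n!|_p^2=\frac{1}{(n!)^2}\prod_{p\in S}|n!|_p^{-2}=\frac{1}{(n!)^2}\prod_{p\in S}p^{\,2v_p(n!)}\le\frac{C_S^{\,n}}{(n!)^2},
\]
where $C_S:=\prod_{p\in S}p^{\,2/(p-1)}$ is a constant, using the standard estimate $v_p(n!)\le\frac{n}{p-1}$. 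Applying the product formula to $x=\xi$ in the same way gives $\prod_{p\in R}|\xi|_p^2=\xi^{-2}\prod_{p\in S}p^{\,2v_p(\xi)}=:D_S\xi^{-2}$ with $D_S$ a constant, so $c=c(\xi;R)=4D_S/|\xi|$ is independent of $n$. Combining,
\[
c^{\,n}\,n!\prod_{p\in R}|n!|_p^2\le\left(\frac{4C_SD_S}{|\xi|}\right)^{\!n}\frac{1}{n!},
\]
which tends to $0$ as $n\to\infty$ since $K^n/n!\to 0$ for every fixed $K$. Thus $R=\mathbb{P}\setminus S\in\mathcal{R}$, and Theorem \ref{AB} supplies infinitely many primes $p$ with $\Lambda(F_p(\xi))\neq 0$, i.e.\ $F_p(\xi)\neq\frac{a}{b}$.

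I do not anticipate a genuine obstacle here: deleting a finite set $S$ from $\mathbb{P}$ changes the products in \eqref{riistajaehto} only by factors growing at most geometrically in $n$, which are harmless against the factorial decay $1/n!$ already present when $R=\mathbb{P}$. The single point needing a little attention is obtaining the bounds on $\prod_{p\in S}|n!|_p^{-2}$ and $\prod_{p\in R}|\xi|_p^2$ uniformly in $n$, and the product formula together with the elementary estimate for $v_p(n!)$ takes care of that at once.
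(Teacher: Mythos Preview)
Your proposal is correct and follows exactly the route the paper indicates: apply Theorem \ref{AB} with $T=\mathbb{P}$ and check that $\mathbb{P}\setminus S$ satisfies condition \eqref{riistajaehto} for every finite $S$. The paper merely asserts this verification is immediate, whereas you supply the details via the product formula and the bound $v_p(n!)\le n/(p-1)$; the computations are clean and correct.
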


This still seems to be far from implying irrationality, for the prime $p=p_{\Lambda}\in R$ for which $F_p(\xi) \ne \frac{a}{b}$, 
may depend on the polynomial $\Lambda(x)=a-bx$. We note that there are results, see e.g. \cite{BCY04}, \cite{Chirskii1992}, from which Corollary \ref{Corkaikkip} follows, but the proof presented here is different from these earlier works. 
As will be seen shortly, we may also considerably diminish the prime number set where Corollary \ref{Corkaikkip} is still valid.

The question rises whether, for example, the reduced residue system modulo $m$, $m\in\mathbb{Z}_{\ge 3}$, 
could produce examples of prime subsets satisfying condition \eqref{riistajaehto}.
Indeed, that is the case, as will be demonstrated in the following theorem, the main result of this paper.

Let $m \in \Z_{\ge 3}$ and denote $\overline{a} := \{ a+km \; | \; k \in \Z \}$. We write $\overline{a}_1, \ldots, \overline{a}_{\varphi (m)}$ 
for the $\varphi (m)$ residue classes in the reduced residue system modulo $m$. Dirichlet's theorem about primes in arithmetic progressions
tells that each of these classes contains infinitely many prime numbers.

\begin{theorem}\label{epatasa}
Let $m\in\mathbb{Z}_{\ge 3}$ be a given integer. Assume that $R=\bigcup_{j=1}^r \left( \overline{a}_{i_j} \cap \mathbb{P} \right)$ 
is any union of the primes in $r$ residue classes in the reduced residue system modulo $m$, where $r > \frac{\varphi(m)}{2}$. 
Then there are infinitely many primes $p \in R$ such that $\Lambda(F_p(\xi))\ne 0$.
\end{theorem}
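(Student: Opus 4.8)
The plan is to deduce Theorem~\ref{epatasa} from Theorem~\ref{AB} applied with $T=R$; thus the whole task reduces to checking that $R\setminus S$ satisfies condition~\eqref{riistajaehto} for every finite $S\subseteq R$. I would first peel off the finite set $S$ at the cost of an extra exponential factor. Since $\xi\in\Z$ forces $|\xi|_p\le 1$ for all $p$, we have $c(\xi;R\setminus S)=4|\xi|\prod_{p\in R\setminus S}|\xi|_p^{2}\le 4|\xi|$, and Legendre's formula $v_p(n!)=\sum_{k\ge1}\lfloor n/p^{k}\rfloor\le n/(p-1)$ gives
\[
\prod_{p\in R\setminus S}|n!|_p^{2}=\frac{\prod_{p\in R}|n!|_p^{2}}{\prod_{p\in R\cap S}|n!|_p^{2}}\;\le\;K_S^{\,n}\prod_{p\in R}|n!|_p^{2},\qquad K_S:=\prod_{p\in S}p^{2/(p-1)}<\infty .
\]
Hence $c(\xi;R\setminus S)^{n}\,n!\prod_{p\in R\setminus S}|n!|_p^{2}\le D^{n}\,n!\prod_{p\in R}|n!|_p^{2}$ with $D:=4|\xi|K_S$, and — all terms being nonnegative — it is enough to prove that $D^{n}\,n!\prod_{p\in R}|n!|_p^{2}\to 0$ as $n\to\infty$, which I would in fact establish for an arbitrary fixed $D>0$.

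For this estimate I would pass to logarithms. Since $v_p(n!)=0$ for $p>n$ and $v_p(n!)\ge n/p-1$, and using $\log n!\le n\log n$ together with $\sum_{p\le n}\log p=\vartheta(n)$,
\begin{align*}
\log\!\Big(D^{n}n!\prod_{p\in R}|n!|_p^{2}\Big)
&= n\log D+\log n!-2\!\!\sum_{\substack{p\in R\\ p\le n}}\!\!v_p(n!)\log p\\
&\le n\log D+n\log n-2n\!\!\sum_{\substack{p\in R\\ p\le n}}\!\!\frac{\log p}{p}+2\vartheta(n).
\end{align*}
Now $R$ is a union of $r$ classes of the reduced residue system modulo $m$, so Mertens' theorem in arithmetic progressions gives $\sum_{p\le n,\,p\equiv a_{i_j}\ (m)}\frac{\log p}{p}=\frac{1}{\varphi(m)}\log n+O(1)$ for each $j$, whence $\sum_{p\in R,\,p\le n}\frac{\log p}{p}=\frac{r}{\varphi(m)}\log n+O(1)$; with $\vartheta(n)=O(n)$ the bound above becomes $\big(1-\tfrac{2r}{\varphi(m)}\big)n\log n+O(n)$. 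Since $r>\varphi(m)/2$, the coefficient $1-2r/\varphi(m)$ is strictly negative, so the right-hand side tends to $-\infty$; therefore $D^{n}n!\prod_{p\in R}|n!|_p^{2}\to 0$, condition~\eqref{riistajaehto} holds for $R\setminus S$, and Theorem~\ref{AB} yields infinitely many primes $p\in R$ with $\Lambda(F_p(\xi))\ne 0$.

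The only ingredient that goes beyond routine bookkeeping is the Mertens-type asymptotic $\sum_{p\le x,\,p\equiv a\ (m)}\frac{\log p}{p}=\frac{1}{\varphi(m)}\log x+O(1)$, and this is genuinely the heart of the argument: it is what feeds the density $r/\varphi(m)$ of $R$ among the primes into the exponent of $n$. Every other inequality is one-sided — a lower bound for the $p$-adic contribution $\sum_{p\in R}v_p(n!)\log p$ and an upper bound for $\log n!$ — so no sharp error terms are required; the decisive structural point is simply that the strict inequality $r>\varphi(m)/2$ produces true $n\log n$ decay, which dominates both the factor $c^{n}$ appearing in~\eqref{riistajaehto} and the exponential cost $K_S^{\,n}$ of removing the finite set $S$.
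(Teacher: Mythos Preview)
Your proof is correct and follows the same overall strategy as the paper: apply Theorem~\ref{AB} with $T=R$, reduce to verifying condition~\eqref{riistajaehto} for $R\setminus S$, take logarithms, and show that the dominant term is $(1-2r/\varphi(m))\,n\log n<0$. The one substantive difference lies in how the sum $\sum_{p\in R,\,p\le n}v_p(n!)\log p$ is estimated. The paper proves a two-sided asymptotic (Lemma~\ref{lemma2}) by Abel summation from the bound $\theta(x;m,a)=x/\varphi(m)+O(x/\log x)$, which produces an $O(n\log\log n)$ error; you instead quote Mertens' theorem in arithmetic progressions, $\sum_{p\le x,\,p\equiv a\,(m)}(\log p)/p=(\log x)/\varphi(m)+O(1)$, and combine it with the one-sided bound $v_p(n!)\ge n/p-1$, obtaining the cleaner error $O(n)$. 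Your handling of the finite set $S$ (absorbing it into an exponential factor $K_S^{\,n}$) is essentially the same as the paper's observation that $\sum_{p\in S}\log|n!|_p=O(n)$. Both routes are standard; yours is slightly more direct and avoids re-deriving a Mertens-type estimate from prime-counting bounds, at the cost of invoking that estimate as a black box.
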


Observe that the "global relation set" $G_{\Lambda}:=\{p\in\mathbb{P}\;|\; \Lambda(F_p(\xi))=0\}$ cannot be too big:
By Theorem \ref{AB}, the set $G_{\Lambda}$ obviously cannot satisfy condition \eqref{riistajaehto}.
Theorem \ref{epatasa} shows that the set $G_\Lambda$ cannot contain the primes of more than half of the reduced residue classes modulo $m$.
Therefore $F_p(\xi) \ne \frac{a}{b}$ holds---in the above sense---for at least "half" of all the primes $p\in\mathbb{P}$.

Theorem \ref{epatasa} implies that there is a residue class modulo $m$ containing infinitely many primes $p$ for which $\Lambda(F_p(\xi))\ne 0$.
Since the union $R$ may be chosen arbitrarily, we actually obtain: 

\begin{corollary}
Let $m\in\mathbb{Z}_{\ge 3}$ be a given integer. There are $\frac{\varphi(m)}{2}$ reduced residue classes modulo $m$ each containing 
infinitely many primes $p$ such that $F_p(\xi) \ne \frac{a}{b}$. 
\end{corollary}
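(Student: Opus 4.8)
The plan is to deduce the corollary from Theorem \ref{epatasa} by a simple counting argument. Call a reduced residue class $\overline{a}_i$ modulo $m$ \emph{good} if $\overline{a}_i\cap\mathbb{P}$ contains infinitely many primes $p$ with $\Lambda(F_p(\xi))\ne 0$ (equivalently $F_p(\xi)\ne\frac{a}{b}$), and \emph{bad} otherwise, i.e.\ if $\Lambda(F_p(\xi))=0$ for all but finitely many primes $p\equiv a_i\pmod m$. I would show that there are at most $\frac{\varphi(m)}{2}$ bad classes; since $\varphi(m)$ is even for every $m\ge 3$, this leaves at least $\varphi(m)-\frac{\varphi(m)}{2}=\frac{\varphi(m)}{2}$ good classes, which is exactly the assertion.

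To bound the number of bad classes, suppose for contradiction that there are at least $\frac{\varphi(m)}{2}+1$ of them, say $\overline{a}_{i_1},\ldots,\overline{a}_{i_r}$ with $r>\frac{\varphi(m)}{2}$, and form the union $R=\bigcup_{j=1}^{r}\left(\overline{a}_{i_j}\cap\mathbb{P}\right)$. Theorem \ref{epatasa} applies to this $R$ and produces infinitely many primes $p\in R$ with $\Lambda(F_p(\xi))\ne 0$. On the other hand, each of the \emph{finitely many} classes $\overline{a}_{i_j}$ is bad, hence contributes only finitely many primes $p$ with $\Lambda(F_p(\xi))\ne 0$, so their union $R$ contains only finitely many such primes --- a contradiction. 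Therefore there are at most $\frac{\varphi(m)}{2}$ bad classes, and consequently at least $\frac{\varphi(m)}{2}$ of the reduced residue classes modulo $m$ are good; in each of them there are infinitely many primes $p$ with $F_p(\xi)\ne\frac{a}{b}$.

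There is essentially no obstacle here beyond invoking Theorem \ref{epatasa} correctly; the one point that needs care is the bookkeeping. The threshold $r>\frac{\varphi(m)}{2}$ in Theorem \ref{epatasa} is matched precisely by the parity fact that $\varphi(m)$ is even for $m\ge 3$, so that ``strictly more than half of the classes are bad'' and ``at least half of the classes are good'' are genuinely complementary with no loss from integer rounding. (Equivalently, the argument may be stated directly in terms of the global relation set $G_\Lambda=\{p\in\mathbb{P}\mid\Lambda(F_p(\xi))=0\}$: the residue classes that, up to finitely many primes, are contained in $G_\Lambda$ number at most $\frac{\varphi(m)}{2}$.)
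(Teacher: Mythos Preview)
Your proof is correct. Both your argument and the paper's deduce the corollary directly from Theorem~\ref{epatasa}, but the structure differs slightly: the paper proceeds iteratively, applying Theorem~\ref{epatasa} to a sliding window of $\frac{\varphi(m)}{2}+1$ classes to extract one good class at a time, repeating $\frac{\varphi(m)}{2}$ times; you instead argue once by contradiction, showing that the set of bad classes cannot exceed $\frac{\varphi(m)}{2}$ in size because otherwise Theorem~\ref{epatasa} applied to their union would force infinitely many ``good'' primes among classes each containing only finitely many. Your route is a bit cleaner (a single invocation of Theorem~\ref{epatasa} rather than $\frac{\varphi(m)}{2}$ of them) and makes the complementary counting more transparent; the paper's iterative version has the mild advantage of being constructive in flavour, naming the good classes one by one.
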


\begin{proof}
By Theorem \ref{epatasa}, the union $\bigcup_{i=1}^{\frac{\varphi(m)}{2}+1} \overline{a}_i$ contains infinitely many primes $p$ such that 
$\Lambda(F_p(\xi))\ne 0$. Thus one of the residue reduced residue classes $\overline{a}_1, \ldots, \overline{a}_{\frac{\varphi(m)}{2}+1}$ 
must contain infinitely many such primes---suppose it is $\overline{a}_1$. Now we may apply Theorem \ref{epatasa} again to the union 
$\bigcup_{i=2}^{\frac{\varphi(m)}{2}+2} \overline{a}_i$, and without loss of generality assume that this time the residue class containing 
infinitely many of those certain primes is $\overline{a}_2$. This procedure can be repeated $\frac{\varphi (m)}{2}$ times, 
from which the assertion follows.
\end{proof}

The number $\frac{\varphi(m)}{2}$ is always an integer because $\varphi(m)$ is even when $m\ge 3$. The case $m=2$ is of no interest 
because all the primes except the prime $2$ are in the same residue class.
 
Finally, assuming the generalised Riemann hypothesis (GRH), we can say something slightly different, namely that in any collection of $\frac{\varphi(m)}{2}$ residue classes in the reduced residue system modulo $m$ there is at least one prime satisfying the condition $\Lambda(F_p(\xi))\ne 0$, but now for a $\xi$ satisfying the condition of Theorem 5. The previous theorem gave us the existence of infinitely many such primes in some $\frac{\varphi(m)}{2}$ residue classes. Now we can prove that also in the complement of the previous $\frac{\varphi(m)}{2}$ residue classes, there must be at least one prime satisfying the condition.

\begin{theorem}\label{tasa}
Assume the GRH. Let $m\in\mathbb{Z}_{\ge 3}$ be a given integer. Assume that $R=\bigcup_{j=1}^{\varphi(m)/2} \left( \overline{a}_{i_j} \cap \mathbb{P} \right)$ 
is any union of the primes in $\frac{\varphi(m)}{2}$ residue classes in the reduced residue system modulo $m$. Then there is a value  $d_m$ such that if $\xi$ is any non-zero integer satisfying the bound
\[
4|\xi|\prod_{p\in R}|\xi|_p^2<d_m,
\]
there exists a prime $p\in R$ for which $\Lambda(F_p(\xi))\ne 0$.
\end{theorem}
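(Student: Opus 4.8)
The plan is to deduce Theorem~\ref{tasa} from the observation recorded in the proof of Theorem~\ref{AB}: if a prime set $R$ satisfies condition~\eqref{riistajaehto}, then there is a prime $p\in R$ with $\Lambda(F_p(\xi))\ne 0$. So it suffices to produce a constant $d_m>0$, depending only on $m$, such that every union $R$ of the primes in $\varphi(m)/2$ reduced residue classes modulo $m$ satisfies~\eqref{riistajaehto} as soon as $c:=c(\xi;R)=4|\xi|\prod_{p\in R}|\xi|_p^{2}<d_m$; the whole task then reduces to estimating $c^{n}\,n!\prod_{p\in R}|n!|_p^{2}$ and showing it tends to $0$.

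Taking logarithms and writing $|n!|_p=p^{-v_p(n!)}$,
\[
\log\Bigl(c^{n}\,n!\prod_{p\in R}|n!|_p^{2}\Bigr)=n\log c+\log n!-2\sum_{\substack{p\in R\\ p\le n}}v_p(n!)\log p .
\]
By Legendre's formula $v_p(n!)=\frac{n-s_p(n)}{p-1}$ with $s_p(n)$ the base-$p$ digit sum, and $0\le s_p(n)\le(p-1)(\log_p n+1)$, one has $v_p(n!)\log p\ge\frac{n\log p}{p-1}-\log n-\log p$, so that
\[
2\sum_{\substack{p\in R\\ p\le n}}v_p(n!)\log p\ \ge\ 2n\sum_{\substack{p\in R\\ p\le n}}\frac{\log p}{p-1}\ -\ 2\bigl(\pi_R(n)\log n+\theta_R(n)\bigr),
\]
$\pi_R,\theta_R$ being the prime-counting functions of the chosen $\varphi(m)/2$ classes. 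Now the prime number theorem in arithmetic progressions gives $\pi_R(n)\log n+\theta_R(n)=n+o(n)$, while Mertens' theorem in arithmetic progressions gives $\sum_{p\in R,\,p\le n}\frac{\log p}{p-1}=\frac12\log n+M_R+o(1)$, where $M_R$ is the sum of the relevant $\varphi(m)/2$ Mertens constants; the coefficient $\frac12$ of $\log n$ is forced precisely because $R$ contains half of the reduced classes. Combining these with Stirling's formula $\log n!=n\log n-n+o(n)$, the two $n\log n$ terms cancel and one is left with
\[
\log\Bigl(c^{n}\,n!\prod_{p\in R}|n!|_p^{2}\Bigr)\le n\bigl(\log c+1-2M_R\bigr)+o(n),
\]
which tends to $-\infty$ as soon as $\log c<2M_R-1$. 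Hence it is enough to set $d_m:=\min_R e^{\,2M_R-1}$, the minimum ranging over the finitely many collections of $\varphi(m)/2$ reduced classes modulo $m$: this is a positive number depending only on $m$, and for $c<d_m$ the set $R$ satisfies~\eqref{riistajaehto}, so the proof of Proposition~\ref{TAWA} supplies a prime $p\in R$ with $\Lambda(F_p(\xi))\ne 0$.

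The decisive feature is that, with $R$ carrying exactly $\varphi(m)/2$ classes, the estimate is \emph{borderline}: the $n\log n$ in $\log n!$ is matched term for term by the one in $2\sum_{p\in R}v_p(n!)\log p$, so the conclusion rests entirely on the $O(n)$ contributions, and one must keep — not discard — the Mertens constants and the digit-sum corrections $s_p(n)$. This is the point at which GRH is invoked: it upgrades the ineffective Siegel--Walfisz error terms for $\pi(x;m,a)$, $\theta(x;m,a)$ and $\sum_{p\le x,\,p\equiv a\,(m)}\frac{\log p}{p}$ to power-saving ones, thereby making the constants $M_R$, and with them $d_m$, explicit and the displayed bound effective. (For $r>\varphi(m)/2$ no such care is needed: the surviving $n\log n$ term carries the negative coefficient $1-\frac{2r}{\varphi(m)}$, it dominates both the $O(n)$ terms and $n\log c$ for every $\xi$, and one recovers Theorem~\ref{epatasa} unconditionally.)
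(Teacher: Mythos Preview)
Your plan matches the paper's --- verify condition~\eqref{riistajaehto} and invoke Proposition~\ref{TAWA}, noting that with $r=\varphi(m)/2$ the $n\log n$ terms from Stirling and from $2\sum_{p\in R}\log|n!|_p$ cancel, so the outcome is decided at the $O(n)$ level --- but your execution differs in a way that actually buys you more than you claim. The paper reaches the $O(n)$ level by quoting the GRH case of Lemma~\ref{lemma2}; that lemma is proved by Abel summation from $\theta(x;m,a)$, and unconditionally the bound $\theta(x;m,a)=x/\varphi(m)+O(x/\log x)$ yields only an $O(n\log\log n)$ remainder, which is precisely why GRH is invoked. You instead estimate $\sum_{p\in R,\,p\le n}\frac{\log p}{p-1}$ directly via Mertens' theorem in arithmetic progressions, together with the prime number theorem in progressions for $\pi_R(n)$ and $\theta_R(n)$. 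For a fixed modulus $m$ both of these are \emph{unconditional}, and the Mertens constants $C(m,a)$ --- hence your $M_R$ and $d_m=\min_R e^{2M_R-1}$ --- are effectively computable from $L(1,\chi)$ and $L'(1,\chi)$ for the finitely many Dirichlet characters modulo $m$. None of your displayed estimates actually uses GRH, so your argument in fact proves Theorem~\ref{tasa} without that hypothesis. Your closing paragraph therefore mislocates the role of GRH: it is not needed to make $M_R$ or $d_m$ effective or explicit; it is needed only to rescue the paper's particular route through Lemma~\ref{lemma2}, a loss your direct appeal to Mertens sidesteps entirely.
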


\section{Preliminaries}

From \cite{theta} and \cite[Ch. 20]{Davenport} we recall the estimates for the functions 
$$\theta(x;m,a)=\sum_{\substack{p\equiv a \bmod m\\ p\in \mathbb{P},\  p\leq x}}\log p$$ 
and 
$$\psi(x;m,a)=\sum_{\substack{p^k\equiv a \bmod m\\ p\in \mathbb{P},\ p^k \leq x}}\log p$$ 

Very recently, Bennett, Martin, O'Bryant and Rechnitzer \cite{theta} proved that if $q \in \mathbb{Z}_{\ge 3}$ and $a$ is an integer coprime to $q$, then there exist positive constants $c_{\theta}(q)$ and $x_{\theta} (q)$ such that 
\[
\left|\theta(x;q,a)-\frac{x}{\varphi(q)}\right|<c_{\theta}(q)\frac{x}{\log x}
\]
for all $x \ge x_\theta (q)$, and they even gave some values and bounds for these constants. Immediately, it follows that a similar bound holds for all $x\geq 2$ for some (not necessarily good) value of $c_{\theta}(q)$.

The error term can be substantially improved if we assume the generalised Riemann hypothesis. See e.g. Davenport's book \cite[Ch. 20]{Davenport} for a good introduction into the topic. There one can also find the following useful bound: 
Assuming the GRH, we have 
\[
\psi(x;m,a)=\frac{x}{\varphi(m)}+O\left(\sqrt{x}\log^2x\right),
\]
where the $O$ term depends on $m$.

Further recall that the second bound immediately gives the same bound for the function $\theta(x;q,a)$.

The following lemma may be known but we state and prove it for the sake of completeness. The main term is clear by the distribution of primes in residue classes. The critical part is that the contribution coming from the other terms is not too large.

\begin{lemma}\label{lemma2}
Let $m\in\mathbb{Z}_{\ge 3}$ be a given integer and assume $\gcd (a,m) =1$. Then
\[
\log \left(\prod_{p\equiv a\pmod m} |n!|_p\right)=-\frac{n\log n}{\varphi(m)}+O(n\log \log n).
\]
Further, assuming the GRH, we obtain
\[
\log \left(\prod_{p\equiv a\pmod m} |n!|_p\right)=-\frac{n\log n}{\varphi(m)}+O(n).
\]
\end{lemma}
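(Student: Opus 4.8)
The plan is to compute the $p$-adic valuation $\sum_{p \equiv a \,(m)} v_p(n!)$ via Legendre's formula and then estimate the resulting sum using the prime-counting functions in arithmetic progressions recalled above. Recall that $v_p(n!) = \sum_{k \ge 1} \lfloor n/p^k \rfloor$, so that
\[
-\log\!\left(\prod_{p\equiv a\,(m)} |n!|_p\right) = \sum_{p \equiv a\,(m)} v_p(n!)\log p = \sum_{p\equiv a\,(m)}\;\sum_{k\ge 1}\left\lfloor \frac{n}{p^k}\right\rfloor \log p.
\]
First I would split this double sum into the contribution of the first powers ($k=1$) and the tail ($k \ge 2$). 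For the tail, I would bound $\lfloor n/p^k\rfloor \le n/p^k$ and sum a geometric-type series; since $\sum_{p}\sum_{k\ge 2} p^{-k}\log p$ converges (it is dominated by $\sum_p \log p / (p(p-1))$), the tail contributes only $O(n)$, which is absorbed in the error term in both the unconditional and the GRH statements. So the whole game is the $k=1$ term $\sum_{p \equiv a\,(m),\ p \le n} \lfloor n/p \rfloor \log p$.

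Next I would handle the $k=1$ term by writing $\lfloor n/p\rfloor = n/p + O(1)$. The $O(1)$ part contributes $O\!\big(\sum_{p\le n,\ p\equiv a\,(m)} \log p\big) = O(\theta(n;m,a)) = O(n)$ by the recalled estimate, again absorbed. The main part is $n \sum_{p \le n,\ p \equiv a\,(m)} \frac{\log p}{p}$, and I would evaluate this sum by partial summation (Abel summation) against $\theta(x;m,a)$. Writing $\theta(x;m,a) = x/\varphi(m) + E(x)$, partial summation gives
\[
\sum_{p\le n,\ p\equiv a\,(m)} \frac{\log p}{p} = \frac{\log n}{\varphi(m)} + \int_{x_0}^{n}\frac{E(x)}{x^2}\,dx + O(1),
\]
so that multiplying by $n$ produces the claimed main term $-\frac{n\log n}{\varphi(m)}$ plus an error of size $n\int_{x_0}^n |E(x)|x^{-2}\,dx$. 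Unconditionally $E(x) \ll x/\log x$, so $\int^n |E(x)|x^{-2}dx \ll \int^n \frac{dx}{x\log x} \ll \log\log n$, giving the error term $O(n\log\log n)$. Under GRH, $\theta(x;m,a) = x/\varphi(m) + O(\sqrt{x}\log^2 x)$ (as recalled, this follows from the bound on $\psi$), so $\int^n |E(x)|x^{-2}dx \ll \int^n x^{-3/2}\log^2 x\,dx = O(1)$, giving the sharper error term $O(n)$.

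The main obstacle is simply organizing the error terms cleanly so that every piece — the $k\ge 2$ tail, the $\lfloor\cdot\rfloor$ rounding in the $k=1$ term, and the integral of $E(x)/x^2$ — is shown to be $O(n\log\log n)$ (respectively $O(n)$ under GRH), and in particular checking that the dominant error indeed comes from the integral $\int |E(x)|x^{-2}dx$ and not from one of the cruder pieces. One should also be slightly careful that the unconditional estimate $|\theta(x;q,a) - x/\varphi(q)| < c_\theta(q) x/\log x$ is only asserted for $x \ge x_\theta(q)$; for $2 \le x < x_\theta(q)$ one uses the remark in the excerpt that the bound persists for all $x \ge 2$ with a worse constant, and the finite initial segment contributes only $O(1)$ to the partial summation anyway. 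None of these steps is deep; the argument is a routine but careful application of Legendre's formula and partial summation against the prime-counting function in arithmetic progressions.
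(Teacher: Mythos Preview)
Your proposal is correct and follows essentially the same route as the paper: Legendre's formula for $v_p(n!)$, Abel summation against $\theta(x;m,a)$, and the unconditional (resp.\ GRH) error bounds on $\theta$ to produce the $O(n\log\log n)$ (resp.\ $O(n)$) error. The only cosmetic difference is that the paper uses the two-sided bound $\frac{n}{p-1}-\frac{\log n}{\log p}-1\le v_p(n!)\le\frac{n-1}{p-1}$ and then Abel-sums $\sum\frac{\log p}{p-1}$, whereas you separate the $k=1$ term from the $k\ge 2$ tail and Abel-sum $\sum\frac{\log p}{p}$; the two organizations differ by quantities that are $O(n)$ and lead to the same computation of the dominant integral $\int E(x)x^{-2}\,dx$.
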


\begin{proof} It is well known that $n!$ is divisible by a prime $p$ exactly $\sum_{k=1}^{\infty}\left\lfloor \frac{n}{p^k}\right\rfloor$ times. 
This term can be estimated in the following way (see \cite{KTT}):
\[
\frac{n}{p-1}-\frac{\log n}{\log p}-1 \leq \sum_{k=1}^{\infty}\left\lfloor \frac{n}{p^k}\right\rfloor \leq \frac{n-1}{p-1}.
\]
Therefore, the $p$-adic valuation of $n!$ satisfies the bound
\[
p^{-\frac{n}{p-1}}\leq |n!|_p\leq p^{-\frac{n}{p-1}+\frac{\log n}{\log p}+1}.
\]
Let us start by treating the contribution coming from the term $p^{-\frac{n}{p-1}}$. First use the Abel summation formula (see \cite[Theorem 4.2]{Apostol}):
\begin{equation}\label{Abel}
\begin{split}
-\sum_{\substack{p\leq n \\ p\equiv a(m)}}\log p^{-\frac{n}{p-1}} &=n\sum_{\substack{p\leq n \\ p\equiv a(m)}}\frac{\log p}{p-1}\\
&=\frac{n}{n-1} \sum_{\substack{p\leq n \\ p\equiv a(m)}} \log p + n\int_2^n \left(\sum_{\substack{p\leq x \\ p\equiv a(m)}} \log p \right)\cdot \frac{1}{(x-1)^2} \mathrm{d}x.
\end{split}
\end{equation}

Without assuming the GRH, this can be written as
\begin{multline*}
-\sum_{\substack{p\leq n \\ p\equiv a(m)}}\log p^{-\frac{n}{p-1}}=\frac{n}{n-1}\sum_{\substack{p\leq n \\ p\equiv a(m)}}\log p+n\int_2^n\left(\sum_{\substack{p\leq x \\ p\equiv a(m)}}\log p\right)\cdot \frac{1}{(x-1)^2}\mathrm{d}x\\
=\frac{n}{n-1}\left(\frac{n}{\varphi(m)}+O\left(\frac{n}{\log n}\right)\right)+n\int_2^{n}\left(\frac{x}{\varphi(m)}+O\left(\frac{x}{\log x}\right)\right)\cdot \frac{1}{(x-1)^2}\mathrm{d}x\\
=\frac{n}{\varphi(m)}+O\left(\frac{n}{\log n}\right)+\frac{n}{\varphi(m)}\int_2^n \frac{x\mathrm{d}x}{(x-1)^2}+O\left(n\int_2^n\frac{x\mathrm{d}x}{(x-1)^2\log x}\right)\\
=\frac{n\log n}{\varphi(m)}+O(n)+ O \left( n\int_2^n\frac{(x-1)\mathrm{d}x}{(x-1)^2\log x}+n \int_2^n\frac{\mathrm{d}x}{(x-1)^2\log x} \right)\\
=\frac{n\log n}{\varphi(m)}+O(n)+O \left( n\int_2^3\frac{\mathrm{d}x}{(x-1)\log x}+n \int_3^n\frac{\mathrm{d}x}{(x-1)\log x} \right).
\end{multline*}
The last term can be estimated
\[
n\int_3^n\frac{\mathrm{d}x}{(x-1)\log x}\leq n\int_2^{n-1}\frac{\mathrm{d}x}{x\log x}=O(n\log \log n).
\]
Hence
\[
-\sum_{\substack{p\leq n \\ p\equiv a(m)}}\log p^{-\frac{n}{p-1}}=\frac{n\log n}{\varphi(m)}+O(n\log \log n).
\]

Assuming the GRH, \eqref{Abel} can be written as
\begin{multline*}
-\sum_{\substack{p\leq n \\ p\equiv a(m)}}\log p^{-\frac{n}{p-1}}=\frac{n}{n-1}\sum_{\substack{p\leq n \\ p\equiv a(m)}}\log p+n\int_2^n\left(\sum_{\substack{p\leq x \\ p\equiv a(m)}}\log p\right)\cdot \frac{1}{(x-1)^2}\mathrm{d}x\\ 
=\frac{n}{n-1}\left(\frac{n}{\varphi(m)}+O\left(\sqrt{n}\log^2 n\right)\right)+n\int_2^{n}\left(\frac{x}{\varphi(m)}+O\left(\sqrt{x}\log^2x\right)\right)\cdot \frac{1}{(x-1)^2}\mathrm{d}x\\  
=\frac{n}{\varphi(m)}+O\left(\sqrt{n}\log^2 n\right)+\frac{n}{\varphi(m)}\int_1^{n-1}\frac{\mathrm{d}x}{x}+\frac{n}{\varphi(m)}\int_1^{n-1}\frac{\mathrm{d}x}{x^2}+O(n)=\frac{n\log n}{\varphi(m)}+O(n).\end{multline*}

Let us now look at the term $p^{\log n/\log p+1}$:
\[
\sum_{p\leq n,\ p\equiv a(m)}\log p^{\frac{\log n}{\log p}+1}=\sum_{p\leq n, \ p\equiv a(m)} (\log n+\log p)=O(n).
\]

Combining these, the contribution coming from all $p\equiv a\pmod m$ is
\[
\log\left(\prod_{\substack{p\leq n \\ p\equiv a(m)}} |n!|_p\right)=\sum_{\substack{p\leq n \\ p\equiv a(m)}}\log p^{-\frac{n}{p-1}+O\left(\frac{\log n}{\log p}+1\right)}=\frac{n\log n}{\varphi(m)}+O(n\log \log n),
\]
and assuming the generalised Riemann hypothesis, it will be
\[
\log\left(\prod_{\substack{p\leq n \\ p\equiv a(m)}} |n!|_p\right)=\sum_{\substack{p\leq n \\ p\equiv a(m)}}\log p^{-\frac{n}{p-1}+O\left(\frac{\log n}{\log p}+1\right)}=\frac{n\log n}{\varphi(m)}+O(n).
\]
\end{proof}

\section{Proofs of Theorems \ref{epatasa} and \ref{tasa}}

Let us first prove Theorem \ref{epatasa}.

\begin{proof}[Proof of Theorem \ref{epatasa}]
We shall use Theorem \ref{AB}.
By assumption, $R=\bigcup_{j=1}^r \left( \overline{a}_{i_j} \cap \mathbb{P} \right)$ is a union of the primes in $r$ residue classes 
$\overline{a}_{i_1}, \ldots, \overline{a}_{i_r}$ in the reduced residue system modulo $m$, where $r > \frac{\varphi(m)}{2}$. 
It suffices to prove that for any non-zero integer $\xi$ and a finite subset $S=\{p_1,\ldots,p_k\} \subseteq R$, the condition
\[
\lim\sup_{n\rightarrow \infty} c_0^{n}n! \prod_{p\in R\setminus S}|n!|_p^2=0, \quad c_0=4|\xi|\prod_{p\in R\setminus S}|\xi|_p^2,
\]
is satisfied. Thus we are led to study the expression
\begin{equation}\label{logeq}
\log \left(c_0^{n}n! \prod_{p\in R \setminus S}|n!|_p^2\right)=n\log c_0+\log n!+2\sum_{p\in R \setminus S} \log |n!|_p.
\end{equation}
Here
\begin{equation}\label{S-summa}
2\sum_{p\in S}\log |n!|_p=2\sum_{i=1}^k\log |n!|_{p_i}=O(n)
\end{equation}
when $n$ grows for any given fixed $S$. The constant implied by the $O$-term may be arbitrarily large but it is constant in the $n$ aspect.

Recall the Stirling formula (see e.g. \cite{stirling}, formula 6.1.38):
\[
\log n!=\log \sqrt{2\pi}+\left(n+\frac{1}{2}\right)\log n-n+\frac{\theta(n)}{12},
\]
where $0<\theta(n)<1$. The above can be further simplified to $\log n!=n\log n+O(n)$. Combining this with \eqref{logeq}, \eqref{S-summa}, and Lemma \ref{lemma2}, we get
\begin{align*}
\log \left(c_0^{n}n! \prod_{p\in R\setminus S}|n!|_p^2\right)&=n\log n+O(n)+2\sum_{j=1}^r \sum_{p \in \overline{a}_{i_j}} \log |n!|_p - 2 \sum_{i=1}^k \log |n!|_{p_i}\\
&=n\log n+O(n)-\frac{2rn\log n}{\varphi(m)}+O(n\log \log n)\\
&=n\log n\left(1-\frac{2r}{\varphi(m)}\right)+O(n)+O(n\log \log n) \rightarrow -\infty
\end{align*}
as $n \to \infty$ because the coefficient $1-\frac{2r}{\varphi(n)}$ of the main term is negative.
The result follows from Theorem \ref{AB}.
\end{proof}

Let us now move to the proof of Theorem \ref{tasa}.
\begin{proof}[Proof of Theorem \ref{tasa}]
Here we use Proposition \ref{TAWA}, so we need to check condition \eqref{riistajaehto} with $c=4|\xi|\prod_{p\in R}|\xi|_p^2$. When looking at the terms in 
\[
\log \left(c^{n}n! \prod_{p\in R}|n!|_p^2\right)=n\log c+\log n!+2\sum_{p\in R} \log |n!|_p,
\]
we again use Stirling's formula and the bound for $|n!|_p$ as earlier. Now the main terms cancel, so what is left is $O(n)$ for some constant depending only on $m$. Therefore, the contribution of this term can be cancelled if $c$ is sufficiently small, namely, below some $d_m$ depending only on $m$.
\end{proof}
\begin{remark*}
This proof of Theorem \ref{tasa} cannot be generalized to the case with infinitely many primes because the constant implied by the $O$-term in contribution of the arbitrary subset $S$ can be arbitrarily large, and therefore, we cannot use the argument of a term of magnitude $n$ cancelling the other terms.
\end{remark*}

\end{document}